\newtheorem{thm}{Theorem}[section]
\newtheorem{lem}[thm]{Lemma}
\theoremstyle{definition}
\theoremstyle{remark}
\newtheorem{rem}[thm]{Remark}
\begin{document}

\title[non $C^1$ solutions]{Non $C^1$ solutions to the special Lagrangian equation}
\author{Connor Mooney}
\address{Department of Mathematics, UC Irvine}
\email{\tt mooneycr@math.uci.edu}
\author{Ovidiu Savin}
\address{Department of Mathematics, Columbia University}
\email{\tt savin@math.columbia.edu}
%\subjclass[2010]{35J60, 35B65}
%\keywords{special Lagrangian equation, regularity}

% ----------------------------------------------------------------
\begin{abstract}
We construct viscosity solutions to the special Lagrangian equation that are Lipschitz but not $C^1$, and have non-minimal gradient graphs.
\end{abstract}
\maketitle
% ----------------------------------------------------------------

%%%%%%%%%%%%%%%%%%%%%%%%%%%%%%%%%%%%%%%%%%%%%%%%%%%%%%%%%%%%%%%%%%%%%%%%%%%%%%%%%%%%%%%%%%%%%%%%%
\section{Introduction}
For a symmetric $n \times n$ matrix $M$ with eigenvalues $\{\lambda_i\}_{i = 1}^n$, we let
$$F(M) = \sum_{i = 1}^n \arctan(\lambda_i).$$
The special Lagrangian equation for a function $u$ on a domain in $\mathbb{R}^n$ is
\begin{equation}\label{sLag}
F(D^2u) = c \in \left(-n \frac \pi 2,\,n \frac \pi 2\right).
\end{equation}
Here $c$ is a constant. Equation (\ref{sLag}), introduced in the seminal work \cite{HL1}, is the potential equation for area-minimizing Lagrangian graphs of dimension $n$ in $\mathbb{R}^{2n}$. 

Classical solvability of the Dirichlet problem for (\ref{sLag}) in a ball with smooth boundary data was established for $|c|$ large in \cite{CNS} (see also \cite{CPW}, \cite{BMS}, \cite{Lu}, and see \cite{BW} for classical solvability of the second boundary value problem). The existence of viscosity solutions to (\ref{sLag}) with continuous boundary data and $c$ arbitrary was established in \cite{HL2} (see also \cite{B}, \cite{DDT}, \cite{CP}, and \cite{HL3}).

The regularity of solutions to (\ref{sLag}) is a delicate issue. It is known that viscosity solutions are real analytic when $|c| \geq (n-2)\frac{\pi}{2}$ (\cite{WY1}, see also \cite{WaY1}, \cite{WaY2}, \cite{WaY3}, \cite{Li}, \cite{Z}, \cite{T2}, \cite{U}, \cite{CW}), or when $u$ is convex (\cite{CSY}, see also \cite{CWY}, \cite{BC}, \cite{BCGJ}). In these cases, (\ref{sLag}) is a concave equation \cite{Y3}, so by the Evans-Krylov theorem (\cite{E}, \cite{K}) it suffices to obtain interior $C^2$ estimates (see also \cite{Y1}). When $|c| < (n-2)\frac{\pi}{2}$ the equation is not concave, and there are examples of viscosity solutions to (\ref{sLag}) which are $C^1$ but not $C^2$ (\cite{NV2}, \cite{WY2}). However, the gradient graphs of these examples are analytic and area-minimizing as geometric objects. It remained open whether all viscosity solutions to (\ref{sLag}) are $C^1$, and whether they have minimal gradient graphs (see e.g. the conjecture at the end of the introduction in \cite{NV2}). In this paper we answer these questions in the negative:

\begin{thm}\label{Main}
There exist $c \in [0,\,\pi/2)$, a smooth bounded domain $\Omega \subset \mathbb{R}^3$, an analytic embedded surface $\Gamma \subset \subset \Omega$ with boundary, and a Lipschitz function $u$ on $\Omega$ that is analytic in $\Omega \backslash \Gamma$, such that 
\begin{enumerate}
\item $F(D^2u) = c$ in the viscosity sense in $\Omega$,
\item $\nabla u$ is discontinuous on $\Gamma \backslash \partial \Gamma$, and
\item The graph of $\nabla u$ is of class $C^{1,\,1}$ but not $C^2$. It is the union of two analytic parts, where one of the parts is minimal and the other is not.
\end{enumerate}
\end{thm}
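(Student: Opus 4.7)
The plan is to construct the gradient graph $\Sigma\subset\mathbb{R}^3_x\times\mathbb{R}^3_y$ of $u$ directly as a $C^{1,1}$ Lagrangian submanifold, built from two analytic Lagrangian $3$-manifolds $\Sigma_1,\Sigma_2$ that meet with a common tangent $3$-plane along a $2$-dimensional analytic surface $\tilde\Gamma$. The potential $u$ is then recovered by integrating the tautological $1$-form along $\Sigma$. For $\nabla u$ to be discontinuous across $\Gamma:=\pi(\tilde\Gamma)$ while $\Sigma$ remains a genuine $C^{1,1}$ submanifold of $\mathbb{R}^6$, the shared tangent $3$-plane along $\tilde\Gamma$ must be non-graphical over $\mathbb{R}^3_x$: the projection $\pi|_\Sigma$ will then have a fold along $\tilde\Gamma$, and the two sheets $\Sigma_i$ project to adjacent regions $\Omega^\pm$ of $\mathbb{R}^3_x$ with distinct $y$-values above corresponding points of $\Gamma$. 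Choosing additive constants so that $u^+=u^-$ on $\Gamma$ gives a continuous Lipschitz function $u$ on $\Omega=\Omega^+\cup\Gamma\cup\Omega^-$.

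To produce the two pieces I would exploit a symmetry of the special Lagrangian equation to reduce the construction to a lower-dimensional problem. A natural ansatz in $\mathbb{R}^3$ is to impose translational or $S^1$-rotational invariance, reducing (\ref{sLag}) to a two-dimensional PDE or an ODE system in which explicit solutions are accessible, and in which the two analytic branches can be found joining along a curve with a common tangent. Equivalently, one can fix analytic Cauchy data on $\tilde\Gamma$---namely $\tilde\Gamma$ itself together with an analytic field of tangent Lagrangian $3$-planes that are not graphical over $\mathbb{R}^3_x$---and invoke Cauchy-Kowalevskaya to propagate two analytic Lagrangian surfaces $\Sigma_1,\Sigma_2$ to the two transverse sides of $\tilde\Gamma$, each of constant Lagrangian angle $c$ by construction. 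In either formulation, the union $\Sigma_1\cup\Sigma_2$ is automatically $C^{1,1}$ because the two pieces share their first-order jets along $\tilde\Gamma$, and fails to be $C^2$ because the second-order jets will generically differ.

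The principal obstacles are to tune the data so that (i) the two branches remain genuinely distinct, otherwise they coalesce into a single analytic extension and $u$ becomes $C^\infty$; (ii) one branch is a piece of a classical minimal special Lagrangian, while the other, although matching the constant-phase condition on the portion that participates in the gradient graph of $u$, has a full analytic extension that is not minimal (for example through being non-graphical, with sheets carrying non-constant Lagrangian angle away from the graph); and (iii) both branches satisfy the Lagrangian and constant-phase conditions with the \emph{same} value of $c$, a highly coupled finite-codimension compatibility condition on the prescribed jet along $\tilde\Gamma$. I expect the hard part to be (i)-(ii), and that it will be resolved by an explicit symmetry-reduced construction rather than by abstract existence arguments, given the dimensional tightness. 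Once $\Sigma$ and $u$ are in place, verifying that $u$ solves $F(D^2u)=c$ in the viscosity sense follows by a standard argument: $u$ is a classical solution on each side of $\Gamma$, and the $C^{1,1}$ fold geometry of $\Sigma$ constrains any smooth test function touching $u$ from above or below at a point of $\Gamma$, preventing it from violating the equation.
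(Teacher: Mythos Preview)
Your proposal has a structural gap that makes the approach unworkable as stated. You require both analytic pieces $\Sigma_1,\Sigma_2$ to satisfy the constant-phase condition with the same value $c$; but a Lagrangian graph with constant Lagrangian angle is special Lagrangian, hence minimal. So your construction would produce two minimal pieces, contradicting part (3) of the theorem. Your attempt to rescue this in (ii) by asking that only the ``analytic extension'' of one piece be non-minimal fails twice: first, an analytic Lagrangian with constant phase on an open set has constant phase everywhere by analytic continuation; second, the theorem asserts non-minimality of the piece itself, not of some extension. Relatedly, your Cauchy--Kowalevskaya idea applies the \emph{same} equation $F=c$ with the \emph{same} Cauchy data on $\tilde\Gamma$ to both sides, and then uniqueness forces $\Sigma_1=\Sigma_2$ as germs---your concern (i) is not a tuning issue but a genuine obstruction.

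The paper's mechanism is quite different and worth internalizing. One works on the Legendre-dual side and builds a $C^{2,1}$ function $w$ solving the degenerate Bellman equation $\max\{F(D^2w)-c^*,\,\det D^2w\}=0$ with a compact free boundary $\partial K$: inside $K$ one takes an explicit convex function $\Phi$ with $\det D^2\Phi\equiv 0$ and $F(D^2\Phi)$ having a strict interior minimum (so the phase is \emph{non-constant} on $K$, hence this piece of the gradient graph is non-minimal); outside $K$ one solves $F(D^2v)=c^*$ by Cauchy--Kowalevskaya with Cauchy data $\Phi|_{\partial K}$. The two pieces solve \emph{different} equations, which is why they match to second order on $\partial K$ but differ at third order. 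Because $\det D^2\Phi=0$ on $K$, the gradient image $\nabla\Phi(K)=\Gamma$ is a $2$-dimensional surface, so after Legendre transform the non-minimal piece is nowhere graphical over the base---it is not a fold with two graphical sheets on either side of $\Gamma$ as you envision, but a vertical $3$-manifold sitting over the $2$-surface $\Gamma$, while the minimal piece covers a full neighborhood of $\Gamma$ diffeomorphically. The Lipschitz jump of $\nabla u$ across $\Gamma\setminus\partial\Gamma$ records the length of the fibers of the degenerate map $\nabla\Phi$. Finally, the viscosity sub-solution property is not automatic from the fold geometry; it is obtained by approximating $u$ uniformly by smooth strict sub-solutions $w_k^*$ built from $w_k=w-x_3^2/k$.
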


Here we clarify what we mean in the last statement. Our approach is to first construct a $C^{2,\,1}$ solution $w$ to the degenerate Bellman equation
\begin{equation}\label{Bellman}
\text{max}\{F(D^2w) - c^*,\,\det D^2w\} = 0
\end{equation}
which has a compact free boundary between the operators. The function $w$ solves the special Lagrangian equation outside of a small smooth convex set $K$, in which $\det D^2w = 0$ but $F(D^2w)$ is not constant. It is analytic inside $K$ and outside $\overline{K}$, and $C^{2,\,1}$ but not $C^3$ across $\partial K$. Thus, the graph $\{(x,\,\nabla w(x))\}$ of $\nabla w$ consists of two analytic parts that meet in a $C^{1,\,1}$ but not $C^2$ fashion, where one part is minimal (the part where $x \in \overline{K}^c$) and the other is not (where $x \in K$ and $F(D^2w)$ is not constant). To get $u$ we take the Legendre transform of $w$, and we interpret the graph of $\nabla u$ as a rigid motion in $\mathbb{R}^3 \times \mathbb{R}^3$ of the graph of $\nabla w$. This is a natural interpretation in view of the fact that the gradient of the Legendre transform of a function is the inverse of the gradient of that function (see Section \ref{MainExample}).

We also remark that the example $u$ from Theorem \ref{Main} is semi-concave, hence $-u$ (which solves the special Lagrangian equation with right hand side $-c$) is semi-convex, like the examples in \cite{NV2} and \cite{WY2}. However, in contrast with previous examples, the example in Theorem \ref{Main} has non-minimal and non-smooth gradient graph (in the sense we describe above). Thus, unlike convexity, semi-convexity does not imply smoothness of the gradient graph for solutions of (\ref{sLag}). 

Theorem \ref{Main} also says something interesting at the level of $C^1$ estimates for degenerate elliptic PDEs, namely, that solutions that are smooth near the boundary (which guarantees interior gradient bounds by the comparison principle) can have interior gradient discontinuities. This stands in contrast with uniformly elliptic equations, which enjoy interior $C^{1,\,\alpha}$ estimates (\cite{C}, \cite{CC}, \cite{T1}; these are in fact optimal, see \cite{NV3}). It remains open whether viscosity solutions to (\ref{sLag}) necessarily have locally bounded gradient, see e.g. \cite{M} and \cite{BMS} for recent results in this direction.

On the other hand, all the known non-smooth solutions to (\ref{sLag}) are not $C^{1,\,1}$. It remains open whether there exist $C^{1,\,1}$ but non $C^2$ singular solutions (equivalently, whether there exist non-flat graphical special Lagrangian cones). The smallest dimension in which such examples could exist is $n = 5$ (\cite{NV1}). For general uniformly elliptic equations, such examples exist in dimensions $n \geq 5$ (\cite{NTV}).

We will prove Theorem \ref{Main} in the following section. In the last section we will discuss related examples of singular solutions to (\ref{sLag}), that can be viewed as small perturbations of the singular solutions in \cite{NV2} and \cite{WY2}. The examples in the last section have non-minimal gradient graphs, and the singularities appear near the center of a ball. We expect that the examples in the last section are not $C^1$, and that their singularities are modeled locally by examples like the one from Theorem \ref{Main}. In particular, we expect that the degenerate Bellman equation (\ref{Bellman}) with compact free boundaries plays an important role in the formation of Lipschitz singularities in solutions to (\ref{sLag}). Furthermore, the examples in the last section suggest that this mechanism of singularity formation is stable.
 
%%%%%%%%%%%%%%%%%%%%%%%%%%%
\section*{Acknowledgments}
C. Mooney was supported by a Sloan Fellowship, a UC Irvine Chancellor's Fellowship, and NSF CAREER Grant DMS-2143668.
O. Savin was supported by the NSF grant DMS-2055617.
The authors are grateful to the referees and to Yu Yuan for helpful comments.

%%%%%%%%%%%%%%%%%%%%%%%%%%%%%%%%%%%%%%%%%%%%%%%%%%%%%%%%%%%%%%%%%%%%%%%%%%%%%%%%%%%%%%%%%%%%%%%%%
\section{Proof of Theorem \ref{Main}}\label{MainExample}
For $\lambda > 0$ to be chosen shortly, let
$$\Phi(x) = \frac{\lambda x_1^2}{1+x_3} + \frac{\lambda x_2^2}{1-x_3}.$$
The function $\Phi$ is convex and analytic in $\{|x_3| < 1\}$. (Note that the one-homogeneous function of two variables $x_1^2/x_3$ is convex in $\{x_3 > 0\}$, since it has only one nonzero Hessian eigenvalue and has positive second derivative in the $x_1$ direction. The terms in $\Phi$ are this same function up to rigid motions, so the convexity of $\Phi$ follows.) Each term in $\Phi$ is a translation of a one-homogeneous function whose Hessian has rank $1$, so $D^2\Phi$ has rank $2$. It follows that
$$\det D^2\Phi = 0.$$
Note also that the image of $\nabla \Phi$ is contained in the paraboloid
\begin{equation}\label{Sigma}
\Sigma := \left\{y_3 = \frac{1}{4\lambda}(y_2^2 - y_1^2)\right\}.
\end{equation}

\begin{lem}\label{LocalMin}
The analytic function $\Theta(x) = F(D^2\Phi(x))$ has a non-degenerate local minimum at $x = 0$ for $\lambda > 0$ small.
\end{lem}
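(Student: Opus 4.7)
My plan is to exploit the identity $\det D^2\Phi \equiv 0$. Near the origin the Hessian $D^2\Phi$ has rank $2$, with eigenvalues $\mu_1, \mu_2, 0$ and both nonzero eigenvalues close to $2\lambda > 0$. Thus
$$\Theta(x) = \arctan\mu_1 + \arctan\mu_2,$$
and since $\mu_1\mu_2 < 1$ for $\lambda$ small, the arctangent addition formula gives
$$\Theta(x) = \arctan\left(\frac{T(x)}{1 - M(x)}\right), \quad T := \operatorname{tr}(D^2\Phi), \quad M := \sigma_2(D^2\Phi),$$
where $\sigma_2$ denotes the sum of the $2\times 2$ principal minors. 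The advantage of this identity is that $T$ and $M$ are polynomial in the entries of $D^2\Phi$, hence manifestly analytic functions of $x$; we have thereby sidestepped any analysis of the eigenvalues as functions of $x$.

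Since $\arctan$ is monotone, it then suffices to show that $G := T/(1-M)$ has a non-degenerate local minimum at $0$. From the explicit formula for $\Phi$ I would write down closed-form rational expressions for $T$ and $M$. The symmetries of $\Phi$ (even in $x_1$ and in $x_2$, and invariant under $(x_1, x_2, x_3) \mapsto (x_2, x_1, -x_3)$) force $T$ and $M$ to contain no linear terms at $0$, so $\nabla T(0) = \nabla M(0) = 0$. A direct second-order Taylor expansion then yields
$$D^2 T(0) = 4\lambda \, \operatorname{diag}(1, 1, 2), \qquad D^2 M(0) = 8\lambda^2 I.$$

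Because the gradients vanish at $0$, the chain rule collapses $D^2 \Theta(0)$ to a positive multiple of
$$D^2 G(0) = \frac{4\lambda}{1-4\lambda^2}\operatorname{diag}(1,1,2) + \frac{32\lambda^3}{(1-4\lambda^2)^2} I.$$
The first term is positive definite of order $\lambda$, while the correction is a positive definite term of order $\lambda^3$, so $D^2 G(0)$, and hence $D^2 \Theta(0)$, is positive definite for all sufficiently small $\lambda > 0$.

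I do not expect a serious obstacle once the rank-$2$ reduction is in place: the problem becomes rational in polynomial invariants of $D^2\Phi$, and the dominance of the leading $\lambda$-term over the $\lambda^2$-correction is immediate. The only conceptual step is the arctangent addition formula, which converts the transcendental eigenvalue sum into a rational expression—this is precisely what makes the rank-$2$ degeneracy of $D^2\Phi$ directly exploitable.
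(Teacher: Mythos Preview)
Your proof is correct, and the route is genuinely different from the paper's. The paper differentiates $\Theta = F(D^2\Phi)$ directly via the chain rule for matrix-valued arguments: at $x=0$ it evaluates $F_{ij}$ and $F_{ij,mn}$ at the diagonal matrix $D^2\Phi(0)$, uses the fourth-order Taylor expansion of $\Phi$, and obtains $D^2\Theta(0) = \lambda\,\operatorname{diag}(4,4,8) + O(\lambda^2)$. Your approach instead exploits the rank-$2$ degeneracy to collapse $\Theta$ to $\arctan\bigl(T/(1-M)\bigr)$ with $T=\sigma_1(D^2\Phi)$ and $M=\sigma_2(D^2\Phi)$, then differentiates a rational function of $x$. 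Both land on the same leading term $4\lambda\,\operatorname{diag}(1,1,2)$, but yours gives an \emph{exact} closed form for $D^2\Theta(0)$ rather than an asymptotic, and it avoids handling the second derivatives $F_{ij,mn}$ of the operator. The paper's method has the virtue of being agnostic to the specific $F$ and to the rank condition, so it would transfer to other symmetric functions of the eigenvalues; your method is tailored to this situation (rank two and $f=\arctan$), which is exactly what makes it cleaner here. The paper also notes, in a remark following the lemma, that one could alternatively compute the eigenvalues explicitly; your symmetric-function reduction is a third option that sits neatly between the two.
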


\begin{proof}
Denote by $f(\lambda):= \arctan \lambda$ and then $F(D^2 u)= \sum f(\lambda_i)$.

We use the expansion of order 4 for $\Phi$ at $0$,
$$ \Phi:= \lambda \left(x_1^2 + x_2^2 + x_3(x_2^2-x_1^2) + x_3^2 (x_1^2 + x_2^2) + O(|x|^5) \right),$$
hence $D^2 \Phi(0)$ is diagonal with eigenvalues $2\lambda$, $2\lambda$ and $0$. 

 We compute $ D \Theta$ and $D^2 \Theta$ at $x=0$ and find
$$ \Theta_k= F_{ij} \,  \Phi_{ijk}=f'(2\lambda)\Phi_{11k} + f'(2\lambda) \Phi_{22k} + f'(0) \Phi_{33k} =0,$$
and
\begin{align*}
\Theta_{kl}& = F_{ij}\,  \Phi_{ijkl} + F_{ij,mn} \Phi _{ijk} \Phi_{mnl} \\
& =  f'(2\lambda)\Phi_{11kl} + f'(2\lambda) \Phi_{22kl} + f'(0) \Phi_{33kl} + O(\lambda^2) \\
&= \Phi_{11kl} +  \Phi_{22kl} + \Phi_{33kl} + O(\lambda^2)\\
& = c_k \lambda \, \delta_k^l + O(\lambda^2),
\end{align*}
with $c_1=c_2=4$ and $c_3=8$. In the computation above the derivatives of $F$ are evaluated at $D^2 \Phi(0)$, and we have used that $$D^3 \Phi = O(\lambda), \quad D^4 \Phi = O(\lambda), \quad f'(2 \lambda) = f'(0) + O(\lambda), \quad f'(0)=1.$$ 

Hence, if $\lambda >0$ is chosen small, then $D^2\Theta(0)$ is positive definite, and the lemma is proved.

\end{proof}

\begin{rem}
Lemma \ref{LocalMin} can also be proven by calculating the eigenvalues of $D^2\Phi$. For $a = \frac{1}{1+x_3}$ and $b = \frac{1}{1-x_3},$ these are $0$ and $\Lambda_{\pm}$, where
$$\frac{1}{2\lambda}\Lambda_{\pm} = \frac{1}{1-x_3^2} + \frac{a^3}{2}x_1^2 + \frac{b^3}{2} x_2^2 \pm \left(\frac{1}{4} (2abx_3 + (b^3x_2^2 - a^3x_1^2))^2 + a^3b^3x_1^2x_2^2 \right)^{1/2}.$$
\end{rem}

Lemma \ref{LocalMin} implies that for $\epsilon > 0$ small, the connected component $K$ of the set $\{\Theta \leq \Theta(0) + \epsilon^2\}$ containing the origin is compact, analytic, uniformly convex, and contained in $B_{C\epsilon}$. Here and below $C$ denotes a large constant, which may change from line to line. As a result, $D^2\Phi$ is within $C\epsilon$ of the diagonal matrix $D^2\Phi(0) = 2\lambda(I - e_3 \otimes e_3)$ in $K$. Later we will use the map
$$\Psi(x) := (\Phi_1(x),\,\Phi_2(x),\,x_3),$$
which is an analytic diffeomorphism in a neighborhood of $0$. Since
$$D^2(\Theta \circ \Psi^{-1})(0) = (D\Psi^{-1}(0))^TD^2\Theta(0)(D\Psi^{-1}(0))$$
is positive, we also have for $\epsilon$ sufficiently small that $\Psi(K),$ the connected component of $\{\Theta \circ \Psi^{-1} \leq \Theta(0) + \epsilon^2\}$ containing the origin, is analytic and uniformly convex.

Now, for $c^* := \Theta(0) + \epsilon^2$, let $v$ be the solution in a small neighborhood of $\partial K$ to
$$F(D^2v) = c^*, \quad v|_{\partial K} = \Phi,\, \quad v_{\nu}|_{\partial K} = \Phi_{\nu}.$$
Here $\nu$ is the outer unit normal to $\partial K$ and we obtain $v$ using Cauchy–Kovalevskaya. Since $\Phi$ and $v$ solve the same equation on $\partial K$ and have the same Cauchy data there, we have
$$D^2v = D^2\Phi \text{ on } \partial K.$$ 
As a result, for $x_0 \in \partial K$, all third derivatives of $\Phi$ and $v$ that involve a differentiation in a direction tangent to $\partial K$ at $x_0$ agree. Since $\Theta_{\nu} > 0$ on $\partial K$ by construction and $F(D^2v)$ is constant, we conclude on $\partial K$ that
$$0 < \partial_{\nu}(\Theta - F(D^2v)) = F_{ij}(\Phi_{ij\nu}-v_{ij\nu}) = F_{\nu\nu}(\Phi_{\nu\nu\nu}-v_{\nu\nu\nu}),$$
which implies that
\begin{equation}\label{Ineq1}
v_{\nu\nu\nu} < \Phi_{\nu\nu\nu} \text{ on } \partial K.
\end{equation}

We let $K_{\mu}$ denote the set of points a distance less than $\mu$ from $K$.
\begin{lem}\label{DetSign}
We have $\det D^2v < 0$ on $K_{\mu} \backslash K$, for $\mu > 0$ small.
\end{lem}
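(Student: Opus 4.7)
The plan is to expand $\det D^2 v$ just outside $\partial K$ using the boundary data and identify the sign of its leading term. Since $D^2 v = D^2\Phi$ on $\partial K$ and $\det D^2\Phi \equiv 0$ on $\{|x_3|<1\}$, we already have $\det D^2 v = 0$ on $\partial K$ for free; the task is to show strict negativity in an outward neighborhood.

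The first step is to identify the jump in third derivatives of $w := v - \Phi$ across $\partial K$. Differentiating the identity $D^2 w = 0$ tangentially along $\partial K$ and invoking symmetry of mixed partials shows that every $w_{ijk}$ with at least one tangential index vanishes on $\partial K$, so only the purely normal third derivative is nontrivial:
$$w_{ijk} = \alpha\,\nu_i\nu_j\nu_k \text{ on } \partial K, \qquad \alpha := v_{\nu\nu\nu} - \Phi_{\nu\nu\nu} < 0$$
by (\ref{Ineq1}). Since $w$ vanishes to second order on $\partial K$, this translates into the Taylor expansion $w(x_0 + t\nu) = (\alpha/6)\,t^3 + O(t^4)$ for $x_0 \in \partial K$ (with $t$ the signed distance, positive outside $K$), and differentiating twice yields $D^2 w(x_0+t\nu) = \alpha\,t\,\nu\otimes\nu + O(t^2)$. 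Applying the cofactor linearization $\det(A+B) = \det A + \mathrm{cof}(A):B + O(|B|^2)$ with $A = D^2\Phi$, $B = D^2 w$, and using $\det D^2\Phi \equiv 0$ together with the fact that $D^2\Phi$ has rank two with positive eigenvalues $\mu_1,\mu_2 \approx 2\lambda$ and kernel direction $e_0$ (so $\mathrm{cof}(D^2\Phi) = \mu_1\mu_2\,e_0\otimes e_0$), I obtain
$$\det D^2 v(x_0+t\nu) = \alpha\,\mu_1\mu_2\,(\nu\cdot e_0)^2\,t + O(t^2).$$

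At every $x_0 \in \partial K$ with $\nu\cdot e_0 \neq 0$, this leading coefficient is strictly negative, so $\det D^2 v < 0$ in an outward normal neighborhood of such points. The main obstacle is the lower-dimensional subset of $\partial K$ on which $\nu\cdot e_0 = 0$ (equivalently, $e_0$ is tangent to $\partial K$): at these points the first-order term vanishes and a careful second-order analysis is required. I expect it to go through by combining (i) the negativity $\alpha < 0$, (ii) the observation that $y \mapsto \langle D^2\Phi(y) e_0, e_0\rangle$ is a nonnegative function attaining its minimum at $y = x_0$, whose Hessian is therefore positive semidefinite there, (iii) uniform convexity of $K$, which yields a strictly positive value of the second fundamental form $\mathrm{II}(e_0, e_0)$ at such boundary points, and (iv) control of $v_{\nu\nu\nu\nu}(x_0)$ from differentiating the equation $F(D^2 v) = c^*$ twice in the normal direction. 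Combining these inputs forces the $t^2$-coefficient of $\det D^2 v(x_0+t\nu)$ to be strictly negative at the exceptional points, which completes the proof.
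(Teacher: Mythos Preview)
Your first-order analysis is correct and matches the paper exactly: at a boundary point where the kernel direction $e_0$ of $D^2\Phi$ is transverse to $\partial K$, the normal derivative of $\det D^2 v$ equals $\alpha\,\mathrm{cof}(D^2\Phi)_{\nu\nu}=\alpha\,\mu_1\mu_2(\nu\cdot e_0)^2<0$, which is precisely the paper's Case~1 computation $\partial_\nu(G(D^2v))=G_{\nu\nu}(v_{\nu\nu\nu}-\Phi_{\nu\nu\nu})$.

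The gap is in the degenerate case $\nu\cdot e_0=0$. There your argument is only a wish list, and the list does not close. Ingredient (iv) is a red herring: differentiating $F(D^2v)=c^*$ twice in $\nu$ yields one linear relation $F_{ij}v_{ij\nu\nu}+F_{ij,kl}v_{ij\nu}v_{kl\nu}=0$ among \emph{all} the $v_{ij\nu\nu}$, whereas what you need is the single entry $v_{\xi\xi\nu\nu}$ (here $\xi=e_0$), and the relation does not isolate it. Ingredient (ii) gives only $\Phi_{\xi\xi\nu\nu}\ge 0$, which is weaker than what is actually required. What the paper exploits instead is the identity $\det D^2\Phi\equiv 0$ one order higher: writing $G=\det$, one has
\[
\partial_\nu^2\bigl(G(D^2v)\bigr)=\partial_\nu^2\bigl(G(D^2v)-G(D^2\Phi)\bigr)
= G_{\xi\xi}\,(v_{\xi\xi\nu\nu}-\Phi_{\xi\xi\nu\nu})
+ G_{ij,kl}\bigl(v_{ij\nu}v_{kl\nu}-\Phi_{ij\nu}\Phi_{kl\nu}\bigr).
\]
The second block vanishes after a short bookkeeping argument (the only third derivatives of $v$ and $\Phi$ that differ are the purely normal ones, and the relevant $G_{\nu\nu,kl}$ all vanish because the $\xi$-row and $\xi$-column of $D^2\Phi(x_0)$ are zero; here one also uses $v_{\xi\xi\nu}=\Phi_{\xi\xi\nu}=0$). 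For the first block, the decisive identity is geometric: since $v_{\nu\nu}-\Phi_{\nu\nu}$ vanishes on the hypersurface $\partial K$, its tangential second derivative in the $\xi$-direction equals the curvature times its normal derivative,
\[
(v_{\nu\nu}-\Phi_{\nu\nu})_{\xi\xi}=\kappa\,(v_{\nu\nu\nu}-\Phi_{\nu\nu\nu})=\kappa\,\alpha<0,
\]
with $\kappa=\mathrm{II}(\xi,\xi)>0$ by uniform convexity of $K$. This gives $\partial_\nu^2(G(D^2v))=G_{\xi\xi}\,\kappa\,\alpha<0$ directly. So the missing idea is not a bound on $v_{\nu\nu\nu\nu}$ coming from the PDE, but rather continuing to subtract $G(D^2\Phi)$ at second order and invoking the hypersurface identity for the function $v_{\nu\nu}-\Phi_{\nu\nu}$; your ingredients (i) and (iii) are indeed used, but (ii) and (iv) should be replaced by this.
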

\begin{proof}
Let $G$ denote determinant. Since $G(D^2v) = G(D^2\Phi) = 0$ on $\partial K$, it suffices to show that 
$$\mbox{$\partial_{\nu}(G(D^2v)) \leq 0$ on $\partial K$, and where equality holds, that $\partial_{\nu}^2(G(D^2v)) < 0.$}$$ To that end we fix $x_0 \in \partial K$, and we let $\xi$ denote the eigendirection at $x_0$ corresponding to the $0$ eigenvalue of $D^2 \Phi(x_0)$. We distinguish two cases.

The first case is that $\xi$ is not tangent to $\partial K$. Then at $x_0$ we pick a system of coordinates with $\nu$ being a coordinate direction, and at $x_0$ we compute
\begin{align*}
\partial_{\nu}(G(D^2v)) &= \partial_{\nu}(G(D^2v) - G(D^2\Phi)) \\
&= G_{ij}(v_{ij\nu} - \Phi_{ij \nu}) \\
&= G_{\nu\nu}(v_{\nu\nu\nu} - \Phi_{\nu\nu\nu}),
\end{align*}
using that $v_{ijk}(x_0) = \Phi_{ijk}(x_0)$ unless $i = j = k = \nu$. Since $\nu$ is not perpendicular to $\xi$, we have that $G_{\nu\nu}(D^2(\Phi(x_0))) > 0$, and we obtain the desired (strict) inequality using (\ref{Ineq1}).

The second case is that $\xi$ is tangent to $\partial K$. Choose coordinates at $x_0$ such that both $\xi$ and $\nu$ are coordinate directions. In these coordinates the only nonzero derivative of $G$ is $G_{\xi\xi} > 0$. In particular, $G_{\nu\nu} = 0$, so the previous calculation implies that $\partial_{\nu}(G(D^2v)) = 0$. Combining these observations we have
$$0 = \partial_{\nu}(G(D^2v)) = \partial_{\nu}(G(D^2\Phi)) = G_{\xi\xi}v_{\xi\xi\nu} = G_{\xi\xi}\Phi_{\xi\xi\nu},$$
hence
\begin{equation}\label{Vanishing}
v_{\xi\xi\nu} = \Phi_{\xi\xi\nu} = 0.
\end{equation}
Now we calculate the second normal derivative:
\begin{align*}
\partial_{\nu}^2(G(D^2v)) &= \partial_{\nu}^2(G(D^2v) - G(D^2\Phi)) \\
&= G_{\xi\xi}(v_{\xi\xi\nu\nu} - \Phi_{\xi\xi\nu\nu}) + G_{ij,\,kl}(v_{ij\nu}v_{kl\nu} - \Phi_{ij\nu}\Phi_{kl\nu}) \\
&= I + II.
\end{align*}
Since all third-order derivatives of $v$ and $\Phi$ involving a tangential direction agree, the only possible nonzero terms in $II$ are those with $i=j=\nu$ or $k = l = \nu$. Using (\ref{Vanishing}), we further reduce $II$ to terms involving $G_{\nu\nu,\,kl}$ where $k$ and $l$ are not both $\xi$. Finally, using that $D^2\Phi(x_0)$ vanishes in the $\xi$ column and row, we see that $G_{\nu\nu,\,kl} = 0$ when $(k,\,l) \neq (\xi,\,\xi)$, thus the term $II$ vanishes.

To estimate the term $I$ note that
$$(v_{\nu\nu} - \Phi_{\nu\nu})_{\xi\xi} = \kappa(v_{\nu\nu\nu} - \Phi_{\nu\nu\nu}),$$
where $\kappa > 0$ is the curvature of $\partial K$ in the direction $\xi$. Using (\ref{Ineq1}) we conclude that
$$\partial_{\nu}^2(G(D^2v)) = \kappa G_{\xi\xi}(v_{\nu\nu\nu} - \Phi_{\nu\nu\nu}) < 0,$$
completing the proof.
\end{proof}

Now, we let
$$w = \begin{cases}
\Phi \text{ in } K, \\
v \text{ in } K_{\mu} \backslash K.
\end{cases}$$
Note that $w \in C^{2,\,1}$ and
\begin{equation}\label{HessianApprox}
 D^2w \text{ is within } C\epsilon \text{ of the matrix } 2\lambda(I - e_3 \otimes e_3) \text{ in } K_{\mu}
 \end{equation} 
 (we assume $\mu$ was taken small). We let
$$\Gamma := \nabla w(K),$$
and we note that $\Gamma$ is the piece of the paraboloid $\Sigma$ (see (\ref{Sigma})) that lies over the projection of $\Psi(K)$ to the horizontal plane.

\begin{lem}\label{Injective}
For $\mu' > 0$ small, the map $\nabla w$ is one-to-one on $K_{\mu'} \backslash K$ and maps $K_{\mu'} \backslash K$ diffeomorphically to a neighborhood of $\Gamma$.
\end{lem}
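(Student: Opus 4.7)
The plan is to use a coordinate change that trivializes the first two components of $\nabla v$. Set $\Xi(x) := (v_1(x), v_2(x), x_3)$. Since $D^2 v = D^2 \Phi$ on $\partial K$, the map $\Xi$ agrees with $\Psi$ on $\partial K$, and $\det D\Xi = v_{11}v_{22} - v_{12}^2$ stays close to $(2\lambda)^2 > 0$ throughout $K_\mu$ by (\ref{HessianApprox}); hence $\Xi$ is an analytic diffeomorphism of $K_\mu$ onto its image, mapping $\partial K$ onto $\partial \tilde K$, where $\tilde K := \Psi(K)$ is uniformly convex. In the coordinates $\xi = \Xi(x)$, the map $\nabla v$ takes the form $(\xi_1, \xi_2, V(\xi))$ for a smooth function $V = v_3 \circ \Xi^{-1}$. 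A chain-rule computation and the Schur complement identity give
$$V_{\xi_3} = \frac{\det D^2 v}{\det D\Xi},$$
which is strictly negative on $\Xi(K_\mu \setminus K)$ by Lemma \ref{DetSign}.

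Next, since $v = \Phi$ on $\partial K$ and $\nabla\Phi(K) \subset \Sigma$, on $\partial \tilde K$ the function $V$ takes the value $V_*(\xi_1, \xi_2) := (\xi_2^2 - \xi_1^2)/(4\lambda)$. For each fixed $(\xi_1^*, \xi_2^*)$ in the interior of the vertical projection of $\tilde K$, the vertical line $\{\xi_1 = \xi_1^*, \xi_2 = \xi_2^*\}$ meets $\partial \tilde K$ at exactly two points $\xi_3^{\min} < \xi_3^{\max}$ (by uniform convexity) and meets $\Xi(K_{\mu'}\setminus K)$ in two disjoint open intervals, one above $\xi_3^{\max}$ and one below $\xi_3^{\min}$, for $\mu'$ small. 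Since $V_{\xi_3} < 0$ and $V$ takes the common value $V_*(\xi_1^*, \xi_2^*)$ at $\xi_3^{\min}$ and $\xi_3^{\max}$, on the upper interval $V$ is monotone and strictly less than $V_*(\xi_1^*, \xi_2^*)$, while on the lower interval it is monotone and strictly greater. Thus $V$ takes disjoint sets of values on the two intervals. The cases where $(\xi_1^*, \xi_2^*)$ lies outside or on the boundary of the projection are analogous (with at most one interval present). Hence the map $\xi \mapsto (\xi_1, \xi_2, V(\xi))$, and so $\nabla v$, is injective on $K_{\mu'} \setminus K$. Combined with local invertibility (from $V_{\xi_3} \neq 0$), $\nabla v$ is a diffeomorphism from $K_{\mu'}\setminus K$ onto an open set $\Omega'$, and $\Omega' \cup \Gamma$ is a neighborhood of $\Gamma$: for interior $(\xi_1^*, \xi_2^*)$ the values of $V$ on the two intervals fill $(V_* - c, V_*) \cup (V_*, V_* + c)$ for some $c > 0$, so adjoining $V_*$ (which is on $\Gamma$) produces an open interval around $V_*$.

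The main obstacle is that $\nabla v|_{\partial K} = \nabla\Phi|_{\partial K}$ is generically $2$-to-$1$ (a fold onto the interior of $\Gamma$). This is precisely resolved by the $\Xi$ coordinates: the two fold partners on $\partial K$ correspond to the two endpoints $\xi_3^{\min}, \xi_3^{\max}$ of a convex slice of $\tilde K$, and the strict monotonicity of $V$ in $\xi_3$ pushes them to opposite sides of $\Gamma$.
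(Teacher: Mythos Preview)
Your proof is correct and follows essentially the same approach as the paper's: both introduce a coordinate change trivializing the first two components of the gradient map (your $\Xi$ built from $v$, the paper's $H=(w_1,w_2,x_3)$ built from $w$, which coincide on $K_{\mu'}\setminus K$), reduce injectivity to monotonicity of the third component along vertical lines via the identity $\partial_{\xi_3}V=\det D^2 v/\det D\Xi$, and exploit the uniform convexity of $\Psi(K)$ to control those lines. The only cosmetic difference is that the paper, by using $w$ (defined on all of $K_\mu$, with $H(K)=\Psi(K)$ exactly), can work inside a single convex neighborhood $D$ where $\partial_3 T^3\le 0$ throughout, whereas you treat the two outer intervals separately and invoke the common boundary value $V_*$ to separate their ranges; both arguments amount to the same monotonicity picture.
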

\begin{proof}
Let $y = H(x) := (w_1(x),\,w_2(x),\,x_3)$. Similar calculations to those in Proposition 3.1 from \cite{WY2} imply that $\det DH > 0$ and that $H$ is distance-expanding, up to a factor depending on $\lambda$. Both facts follow quickly from (\ref{HessianApprox}), which implies that $DH$ is within $C\epsilon$ of the diagonal matrix with entries $2\lambda,\, 2\lambda,\, 1$ in $K_{\mu}$. (To verify distance-expanding one can e.g. combine the preceding observation with the fundamental theorem of calculus, which implies that $H(z)-H(x) = [\int_0^1 DH(tz + (1-t)x)\,dt]\cdot [z-x])$. In particular, $H$ is a global diffeomorphism of $K_{\mu}$. As noted above, the set $H(K) = \Psi(K)$ is an analytic uniformly convex set. Thus, for $\mu' > 0$ sufficiently small, $H(K_{\mu'})$ is contained in a convex neighborhood $D \subset H(K_{\mu})$ of $H(K)$. We will show that $\nabla w$ is injective in $K_{\mu'} \backslash K$.

Because $H$ is a diffeomorphism, it suffices to check that $T := \nabla w \circ H^{-1}$ is injective on $D \backslash H(K)$. We have
$$T(y) = (y_1,\, y_2,\, w_3(H^{-1}(y))).$$
Since $D$ is convex, every vertical line intersects it in a connected segment, so it is enough to show that $\partial_3T^3 = \partial_{y_3}(w_3(H^{-1}(y))) \leq 0$, with strict inequality when $y \in D \backslash H(K)$. This follows directly from the identity
\begin{equation}\label{Monotone}
\partial_{y_3}(w_3(H^{-1}(y))) = \det DT(y) = \det D^2w(H^{-1}(y))\det DH^{-1}(y)
\end{equation}
and Lemma \ref{DetSign}.

It just remains to show that $\nabla w$ maps $K_{\mu'}$ into a neighborhood of $\Gamma$. Equivalently, $T$ maps $H(K_{\mu'})$ into a neighborhood of $\Gamma$. Using the monotonicity $\partial_3T^3 < 0$ away from $H(K)$ we see that the image of $T$ contains a small vertical segment through every point in $\Gamma$, and the result follows from the continuity of $T$.
\end{proof}

For a $C^2$ function $w$, we define its Legendre transform $w^*$ on the image of the gradient of $w$ by the formula
\begin{equation}\label{LT}
w^*(\nabla w) = x \cdot \nabla w - w(x),
\end{equation}
with $w^*$ being possibly multivalued. Although the Legendre transform is typically used for convex functions, this definition enjoys some of the same important properties. More precisely, if $\det D^2 w (x_0)\ne 0$, then in a neighborhood of $x_0$ the Legendre transform $w^*$ is single-valued, $\nabla w^*$ is the inverse of $\nabla w$, and $D^2 w^* = (D^2 w)^{-1}$, hence
$$F(D^2 w^*) + F(D^2 w) = (n- 2 l) \frac \pi 2,$$
where $l$ denotes the number of negative eigenvalues of $D^2w$. Geometrically, taking the Legendre transform corresponds to making a rigid motion of the gradient graph, which can be seen using the gradient-inverting property.

Using Lemma \ref{Injective} we conclude that there exists a neighborhood of $\Gamma$ on which the Legendre transform 
$u = w^*$
of $w$ is single-valued. Away from $\Gamma$, the function $u$ is analytic and has two positive Hessian eigenvalues and one negative Hessian eigenvalue, thus it solves
\begin{equation}\label{finaleq}
F(D^2u) = \frac{\pi}{2} - c^* := c
\end{equation}
classically away from $\Gamma$. We also calculate away from $\Gamma$ that
$$u_{33} = \frac{1}{\det(D^2w)}\text{cof}(D^2w)_{33} < 0,$$
and $u_{33}$ tends to $-\infty$ on $\Gamma$. On $\Gamma \backslash \partial \Gamma$, the function $u$ has a ``downward" Lipschitz singularity. Indeed, from the identity
$$u_3(y_1,\,y_2,\,w_3(H^{-1}(y))) = u_3(\nabla w(H^{-1}(y))) = y_3$$
we infer that $u_3^+$ and $u_3^-$, the limits of $u_3$ from above and below along vertical segments through $\Gamma \backslash \partial \Gamma$, satisfy that 
$$(u_3^+ - u_3^-)(y_1,\,y_2,\,w_3(H^{-1}(y))) = -L(y_1,\,y_2) < 0,$$
where $L(y_1,\,y_2)$ is the length of the intersection between $\Psi(K)$ and the vertical line through $(y_1,\,y_2,\,0)$.

We conclude from this discussion that $u$ is concave along vertical lines, and on $\Gamma$ it cannot be touched from below by any $C^2$ function. As a consequence, $u$ is a viscosity super-solution to (\ref{finaleq}).
Note also that $F(D^2w) \leq c^*$. It follows that $w_k := w - x_3^2/k$ satisfies $F(D^2w_k) < c^*$ for all $k > 0$. We note that $D^2w_k$ has two positive eigenvalues and one negative eigenvalue, and by similar considerations to those above, $w_k^*$ is single-valued and solves
$$F(D^2w_k^*) = \pi/2 - F(D^2w_k) > c$$
classically in a neighborhood of $\Gamma$. We claim that $w_k^*$ converge uniformly to $u$ in a neighborhood of $\Gamma$, which implies that $u$ is also a viscosity sub-solution to (\ref{finaleq}) and completes the construction. To prove the claim, note that by the definition of Legendre transform (\ref{LT}),
$$w_k^*(\nabla w(x) - 2k^{-1}x_3e_3) - w^*(\nabla w(x)) = -k^{-1}x_3^2,$$
and use that $w_k^*$ have uniformly bounded gradient (their gradients lie in $K_{\mu}$).

\begin{rem}
By combining the proofs of Lemmas \ref{DetSign} and \ref{Injective} one can show that $u$ is $C^{1,\,1/2}$ up to $\Gamma$ from each side at points on $\Gamma \backslash \partial \Gamma$, and $u$ is $C^{1,\,1/5}$ on $\partial \Gamma$. Indeed, for $z_0 \in \Gamma \backslash \partial \Gamma$, let $x_0$ be either pre-image under $\nabla w$ of $z_0$ in $\partial K$. Lemma \ref{DetSign} shows that $\partial_{\nu}(G(D^2w)) < 0$ at $x_0$. Using this in (\ref{Monotone}) one can conclude that $|\nabla w(x) - \nabla w(x_0)| \geq C^{-1}|x - x_0|^2$ for $x \in B_r(x_0) \cap (K_{\mu'} \backslash K)$ and $r$ small, giving $C^{1/2}$ regularity of $\nabla u$ on each side of $\Gamma$ at $z_0$. Likewise, if $z_0 \in \partial \Gamma$, then one has $\partial_{\nu}^2(G(D^2w)) < 0$ at $x_0$. Using the uniform convexity of $\partial K$ one concludes in a similar way using (\ref{Monotone}) that $|\nabla w(x) - \nabla w(x_0)| \geq C^{-1}|x-x_0|^5$ for $x \in K_{\mu'} \backslash K$, corresponding to $C^{1/5}$ regularity of $\nabla u$ at $z_0$.
\end{rem}

%%%%%%%%%%%%%%%%%%%%%%%%%%%%%%%%%%%%%%%%%%%%%%%%%%%
\section{Related Examples}
The examples in \cite{NV2} and \cite{WY2} are obtained by starting with an analytic solution to the special Lagrangian equation with singular Hessian at the origin and injective gradient. The gradient graph can then be rotated so it has a ``vertical" tangent direction at the origin, and the new potential (the Legendre transform of the original one) is $C^1$ but not $C^2$. Rotating the gradient graphs a tiny bit further gives rise to a potential that is multi-valued in a small neighborhood of the origin. By solving the Dirichlet problem for (\ref{sLag}) with boundary data given by that of the multi-valued potential, one obtains solutions that cannot have minimal gradient graph. Here and below, by minimal we mean a mass-minimizing integral varifold. In this section we outline a proof, and we discuss the relationship between these examples and the one from the previous section. The idea of working in rotated coordinate systems has been used to prove regularity and Liouville-type theorems in many contexts, see e.g. \cite{Y2}, \cite{CY}, and \cite{CSY}.

\vspace{3mm}   

{\bf Step 1:} Calculations in Section $2$ of \cite{WY2} show that there is a solution $w$ to the special Lagrangian equation with $c = \pi/2$ in $B_{2\kappa} \subset \mathbb{R}^3$ such that
\begin{align*}
w &= \frac{1}{2}\left(x_1^2 + x_2^2\right) + x_3(x_1^2-x_2^2) \\
&+ \frac{1}{12}x_3^2\left(18 x_1^2 + 18 x_2^2 - x_3^2\right) - \frac{1}{8}(x_1^2 + x_2^2)^2 + O(|x|^5).
\end{align*}
It is shown that the two largest eigenvalues of $D^2w$ are close to $1$, and that the smallest eigenvalue $\lambda_3$ of $D^2w$ is analytic near the origin and satisfies
\begin{equation}\label{QuadSep}
\lambda_3 = -|x|^2 + O(|x|^3).
\end{equation}

\begin{rem}
The expansion of $w$ follows from the form of the polynomial $P$ on pg. 1161 of \cite{WY2}. We took $m = 2$, which determines the coefficients $a_0 = -1,\, a_1 = 6,\, a_2 = -3/2$ (see the bottom of pg. 1162). Taking $\nu = 1/12$ and using the formulae for $H$ and $\lambda_3$ on pages 1162, 1163 gives the conclusions above. 
\end{rem}

Let $\epsilon > 0$ be small and let $\tan\theta = \epsilon$. Let $(x,\,y)$ be coordinates of $\mathbb{R}^6$ with $x$ and $y$ in $\mathbb{R}^3$. Rotating the gradient graph of $w$ by an angle $\theta$, that is, representing the graph in new coordinates
$$\tilde{x} = \cos\theta x - \sin\theta y, \quad \tilde{y} = \sin\theta x + \cos\theta y,$$ 
we get a new potential $\tilde{w}$ which satisfies
$$\nabla \tilde{w}(\cos\theta x - \sin\theta \nabla w(x)) = \sin\theta x + \cos\theta \nabla w(x),$$
$$D^2\tilde{w}(\cos\theta x - \sin\theta \nabla w(x)) = (I-\epsilon D^2w(x))^{-1}(\epsilon I + D^2w(x)),$$
$$F(D^2\tilde{w}) = \frac{\pi}{2} + 3\theta \text{ in } B_{\kappa}.$$
Letting $\tilde{\lambda}_3$ be the smallest eigenvalue of $D^2\tilde{w}$, we conclude using (\ref{QuadSep}) that
$$\tilde{\lambda}_3(\cos\theta x - \sin\theta \nabla w(x)) = \epsilon - (1+\epsilon^2)|x|^2 + O(|x|^3),$$
hence $\tilde{\lambda}_3(0) = \epsilon,\, \nabla \tilde{\lambda}_3(0) = 0$, and 
$$D^2\tilde{\lambda}_3(0) = -\frac{2(1+\epsilon^2)}{\cos^{2}\theta}\left((1-\epsilon)^{-2}(e_1 \otimes e_1 + e_2 \otimes e_2) + e_3 \otimes e_3\right).$$
For $\epsilon$ small, the connected component $Z$ of the set $\{\tilde{\lambda}_3 > 0\}$ containing the origin is thus an analytic uniformly convex set contained in $B_{C\sqrt{\epsilon}}$. (Here and below, $C$ and $c$ will denote constants independent of $\epsilon$ that may change as we refer to them.) Furthermore, for 
$$\Psi(x) := (\tilde{w}_1,\, \tilde{w}_2,\, x_3),$$
the same is true for the set $\Psi(Z)$, that is, the connected component of $\{\tilde{\lambda}_3 \circ \Psi^{-1} > 0\}$ containing the origin.

\vspace{3mm}

{\bf Step 2:} The Legendre transform $\tilde{w}^*$ of $\tilde{w}$ is defined in a ball $B_{2d}$, and for $\epsilon$ small it is analytic and single-valued in $B_{2d} \backslash B_{d/8}$. Let $u$ be the viscosity solution to the Dirichlet problem
$$F(D^2u) = -3\theta \text{ in } B_{3d/2}, \quad u|_{\partial B_{3d/2}} = \tilde{w}^*,$$
the existence of which was established in \cite{HL2}. We claim that for $\epsilon$ small $u$ is smooth in $B_d \backslash B_{d/2}$, and furthermore that
$$\|u - \tilde{w}^*\|_{C^2(B_{d} \backslash B_{d/2})} < C\epsilon^{2}.$$
To show this, it suffices to establish that 
\begin{equation}\label{C0bound}
\|u-\tilde{w}^*\|_{L^{\infty}(B_{3d/2} \backslash B_{d/4})} < C\epsilon^{2}.
\end{equation} 
Indeed, the small perturbations theorem in \cite{Sa} then implies that $u$ is smooth and bounded in $C^{2,\,\alpha}$ in $B_{5d/4} \backslash B_{3d/8}$. Applying the Schauder interior estimates (\cite{GT}) to the difference of $u$ and $\tilde{w}^*$ (which solves a linear equation with coefficients bounded in $C^{\alpha}$ by the preceding observation) implies the desired $C^2$ estimate.

We show (\ref{C0bound}) using barriers. First, using the convexity of $\Psi(Z)$ and arguments similar to those in Lemma \ref{Injective}, one can show that the preimages under $\nabla \tilde{w}$ of vertical lines are nearly vertical curves that have connected intersection with $Z$. As one follows one of these curves upwards, $\tilde{w}_3$ decreases when the curve lies outside of $Z$ and increases when it is inside of $Z$. This means that $\tilde{w}^*$ is multivalued in a simple way: the graph of $\tilde{w}^*$ along a vertical line is either single valued and concave, or it consists of two crossing concave pieces that lie below and are connected by a convex piece (see Figure \ref{Fig1}). Since $\tilde{w}^*$ solves the dual equation $F(D^2\tilde{w}^*) = -3\theta$ where it is concave in the vertical direction, we conclude that the function $\text{min}(\tilde{w}^*)$ given by the minimum of the possible values of $\tilde{w}^*$ is a super-solution to the equation solved by $u$. In particular, $u \leq \text{min}(\tilde{w}^*)$.

\begin{figure}
 \begin{center}
    \includegraphics[scale=0.4, trim={0mm 100mm 0mm 20mm}, clip]{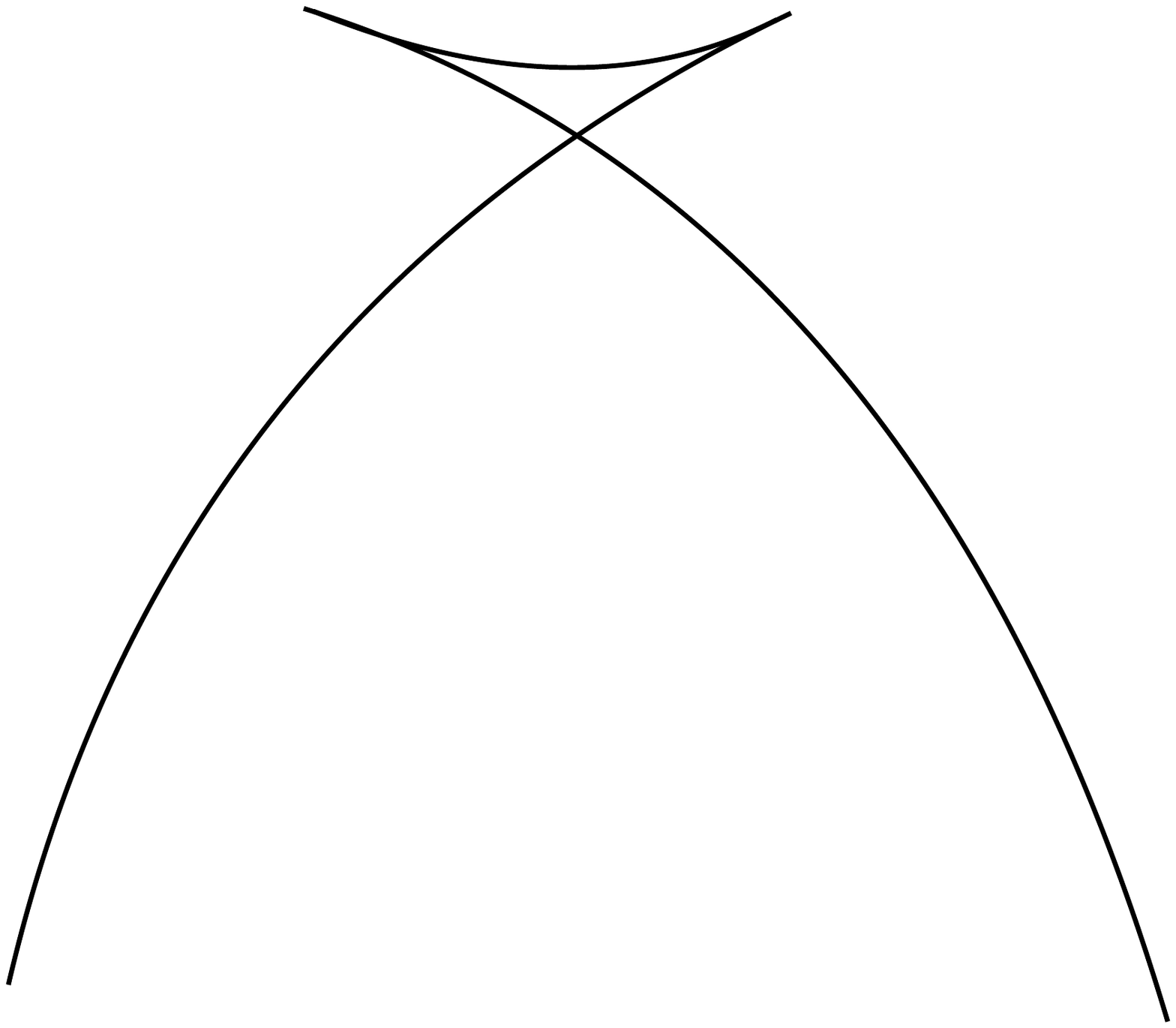}
\caption{The graph of $\tilde{w}^*$ restricted to a vertical line.}
\label{Fig1}
\end{center}
\end{figure}

Second, since $a_{ij} := F_{ij}(D^2\tilde{w})$ is nearly constant in $B_{\kappa}$, we can build a positive super-solution $\varphi$ to the linearized equation $a_{ij}\varphi_{ij} < 0$ in $B_{\kappa}$ that agrees (up to an affine transformation) with $|x|$ to a negative power outside $B_{C\sqrt{\epsilon}}$, is glued to a quadratic polynomial with Hessian eigenvalues smaller than $-10$ in $B_{C\sqrt{\epsilon}}$, and satisfies $|\varphi| \leq C\epsilon$. More precisely, we may assume after an affine transformation that $a_{ij} = \delta_{ij} + O(|x|)$, so for $\kappa$ small, the function $|x|^{-1/2}$ is a super-solution to the linearized equation in $B_{\kappa} \backslash \{0\}$. If we replace this function by a paraboloid in $B_{C\sqrt{\epsilon}}$ with matching values and derivatives on the boundary (call the resulting function $\varphi_0$), then $D^2\varphi_0 = -c\epsilon^{-5/4}I$ in $B_{C\sqrt{\epsilon}}$. Taking $\varphi = C\epsilon^{5/4}\varphi_0$ does the job, since $\varphi_0 \leq C\epsilon^{-1/4}$ in $B_{C\sqrt{\epsilon}}$. We remark that by gluing $|x|^{-1/2}$ to a paraboloid a little more carefully, we may assume that $\varphi$ is smooth.

We conclude for $\epsilon$ small that $\bar{w} := \tilde{w} + \epsilon \varphi$ is a super-solution to the nonlinear equation solved by $\tilde{w}$. Since $D^2\varphi \leq -10I$ in $B_{C\sqrt{\epsilon}}$, the smallest eigenvalue of $D^2\bar{w}$ is everywhere negative, and by similar considerations as in the previous section, $\bar{w}$ has a single-valued Legendre transform. Moreover, $\bar{w}^*$ lies within $C\epsilon^2$ of $\tilde{w}^*$ in $B_{3d/2} \backslash B_{d/4}$ and is a sub-solution of the dual equation solved by $u$ (again by similar reasoning as in the previous section). For the last claim we are using that 
$$F(D^2\bar{w}^*) = \frac{\pi}{2} - F(D^2\bar{w}) > \frac{\pi}{2}-F(D^2\tilde{w}) = -3\theta.$$ 
The maximum principle implies that $u \geq \bar{w}^* - C\epsilon^2$ in $B_{3d/2}$ (indeed, the function on the right lies below $u$ on $\partial B_{3d/2}$ and is a smooth sub-solution to the nonlinear equation solved by $u$). In particular, we have
$$\tilde{w}^* - 2C\epsilon^2 \leq \bar{w}^* - C\epsilon^2 \leq u \leq \text{min}(\tilde{w}^*) = \tilde{w}^*$$
in $B_{3d/2} \backslash B_{d/4}$, establishing the inequality (\ref{C0bound}).

\vspace{3mm}

{\bf Step 3:} Assume by way of contradiction that $\Gamma_u$ is a mass-minimizing integral varifold. By the above considerations, the graphs $\Gamma_u := \{(\nabla u(y),\, y) : y \in B_d\}$ and $\Gamma_{\tilde{w}} = \{(x,\,\nabla \tilde{w}(x)) : x \in B_{\kappa}\}$ are $\epsilon^{2}$-close in $C^1$ when $y$ is restricted to $B_d \backslash B_{d/2}$. Note that $\Gamma_{\tilde{w}}$ is graphical over its tangent $3$-plane $P$ at the origin provided $\kappa$ is small, and lies within a cylinder of radius $C\kappa^2$ around $P$. The same is thus true of $\Gamma_u$ near its boundary, hence everywhere by the maximum principle. There is a competitor for $\Gamma_u$ in the ball ${\bf B}$ of radius $\kappa$ in $\mathbb{R}^6$ obtained by connecting $\Gamma_u$ to $P$ on $\partial {\bf B}$ and replacing $\Gamma_u$ with $P$ in ${\bf B}$ that has mass $(1 + \epsilon_{\kappa})|B_{\kappa}|$, where $\epsilon_{\kappa} << 1$ for $\kappa$ small (we in fact have $\epsilon_{\kappa} \leq C\kappa^3$). Since this bounds the mass of $\Gamma_u$ from above in ${\bf B}$, for $\kappa$ small we can apply the Allard theorem (as stated e.g. in Theorem 3.2 and the remark thereafter in \cite{DL}, see also \cite{A} and \cite{Si}) to conclude that $\Gamma_u$ is smooth in ${\bf B}/2$. Moreover, $\Gamma_u$ and $\Gamma_{\tilde{w}}$ are the graphs over $P$ of maps that are $\epsilon^{2}$-close in $C^1$ in an annulus, have gradient bounded by $C\kappa$ (for $\Gamma_{\tilde{w}}$ this is just from Taylor expansion and for $\Gamma_u$ this follows from a quantitative form of the Allard theorem, see e.g. Theorem 2.1 in \cite{DGS}) and solve the minimal surface system (this last statement due to the fact that $\Gamma_u$ and $\Gamma_{\tilde{w}}$ are mass-minimizing, so in particular they are critical points of area). Using the standard regularity theory of systems arising as Euler-Lagrange equations of functionals with uniformly convex integrands (the area element is uniformly convex for maps with small gradient) we infer that $\Gamma_u$ and $\Gamma_{\tilde{w}}$ are everywhere $\epsilon^{2}$-close in the $C^1$ sense. In particular, $\Gamma_u$ is graphical in the $x$ variable as well, so $u^*$ is single-valued and satisfies
$$D^2u^*(0) = D^2\tilde{w}(0) + O(\epsilon^{2}) > C^{-1}\epsilon I$$
for $\epsilon$ small. This implies that $D^2u(\nabla u^*(0))$ is a positive matrix, contradicting the equation for $u$.

\vspace{3mm}

One feature of the examples in this section is that the singularities occur near the center of a ball, in contrast with the examples in the previous section, which are only constructed in a small neighborhood of a singularity. Another feature is that the singularities of the examples in this section exist for all choices of $\epsilon$ small, illustrating their stable nature. 

The argument above shows that $u$ is not well-approximated by $\tilde w^*$ near the origin. Instead, we need to consider the Legendre transform of a solution $v$ to the modified equation 
$$ \max \{ F(D^2 v)- \frac \pi 2 - 3 \theta, \det \, D^2 v\} =0, \quad \quad v= \tilde w \quad \mbox{on $\partial B_\kappa$.}$$
This is in fact the starting point of our construction in Theorem \ref{Main}, in which we exhibit a $C^{2,1}$ solution of \eqref{Bellman} with an analytic compact free boundary between the two operators. 

We expect that the examples constructed in the previous section are in fact local models for the singularities appearing in this section. More precisely, we conjecture that for all $\epsilon > 0$ small, the examples $u$ constructed in this section exhibit Lipschitz singularities, and moreover that their Legendre transforms $u^*$ solve degenerate Bellman equations with compact free boundaries. The main difficulty consists in showing that solutions $v$ to the equation above are of class $C^2$ and have injective gradient.
On the other hand, after an appropriate rescaling, as $\epsilon \to 0$ the equation linearizes  to a model equation of the type
$$ \max \{ \triangle v, v_{33} + 1 - |x|^2\} =0, \quad \quad v \to 0 \quad \mbox{as $|x| \to \infty$.}$$
This problem has a compact free boundary which seems to have good regularity properties. We intend to analyze these questions further in a subsequent work. 

%%%%%%%%%%%%%%%%%%%%%%%%%%%%%%%%%%%%%%%%%%%%%%%%%%%%%%%%%%%%%%%%%%%%%%%%%%%%%%%%%%%

%%%%%%%%%%%%%%%%%%%%%%%%%%%%%%%%%%%%%%%%%%%%%%%%%%%%%%%%%%%%%%%%%%%%%%%%%%%%%%%%%%%%


\begin{thebibliography}{9999}
\bibitem[A]{A} Allard, W. K. On the first variation of a varifold. {\it Ann. of Math. (2)} {\bf 95} (1972), 417-491.
\bibitem[BC]{BC} Bao, J.-G.; Chen, J.-Y. Optimal regularity for convex strong solutions of special Lagrangian equations in dimension 3. {\it Indiana Univ. Math. J.} {\bf 52} (2003), 1231-1249.
\bibitem[BCGJ]{BCGJ} Bao, J.-G.; Chen, J.-Y.; Guan, B.; Ji, M. Liouville property and regularity of a Hessian quotient equation. {\it Amer. J. Math.} {\bf 125} (2003), 301-316.
\bibitem[B]{B} Bhattacharya, A. The Dirichlet problem for the Lagrangian mean curvature equation. Preprint 2020, arXiv:2005.14420.
\bibitem[BMS]{BMS} Bhattacharya, A.; Mooney, C.; Shankar, R. Gradient estimates for the Lagrangian mean curvature equation with critical and supercritical phase. Preprint 2022, arXiv:2205.13096.
\bibitem[BW]{BW} Brendle, S.; Warren, M. A boundary value problem for minimal Lagrangian graphs. {\it J. Differential Geom.} {\bf 84} (2010), 267-287.
\bibitem[C]{C} Caffarelli, L. Interior a priori estimates for solutions of fully nonlinear equations. {\it Ann. Math.} {\bf 130} (1989), 189-213.
\bibitem[CC]{CC} Caffarelli, L.; Cabr\'{e}, X. {\it Fully Nonlinear Elliptic Equations}. Colloquium Publications 43. Providence, RI: American Mathematical Society, 1995.
\bibitem[CNS]{CNS} Caffarelli, L.; Nirenberg L.; Spruck, J. The Dirichlet problem for nonlinear second-order elliptic equations. III. Functions of the eigenvalues of the Hessian. {\it Acta Math.} {\bf 155} (1985), 261-301.
\bibitem[CY]{CY} Chang, A. S.-Y.; Yuan, Y. A Liouville problem for the sigma-2 equation. {\it Discrete Contin. Dyn. Syst.} {\bf 28} (2010), 659-664.
\bibitem[CSY]{CSY} Chen, J. Y.; Shankar, R.; Yuan, Y. Regularity for convex viscosity solutions of special Lagrangian equation. {\it Comm. Pure Appl. Math.}, to appear.
\bibitem[CWY]{CWY} Chen, J. Y.; Warren, M.; Yuan, Y. A priori estimate for convex solutions to special Lagrangian equations and its application. {\it Comm. Pure Appl. Math.} {\bf 62} (2009), 583-595.
\bibitem[CW]{CW} Chou, K.-S.; Wang, X.-J. A variational theory of the Hessian equation. {\it Comm. Pure Appl. Math.} {\bf 54} (2001), 1029-1064.
\bibitem[CP]{CP} Cirant, M.; Payne, K. Comparison principles for viscosity solutions of elliptic branches of fully nonlinear equations independent of the gradient. {\it Mathematics in Engineering} {\bf 3} (2021), 1-45.
\bibitem[CPW]{CPW} Collins, T.; Picard,S.; Wu, X. Concavity of the Lagrangian phase operator and applications. {\it Calc. Var. Partial Differential Equations} {\bf 56}, Art. 89 (2017).
\bibitem[DL]{DL} De Lellis, C. Allard's interior regularity theorem: an invitation to stationary varifolds. Proceedings of CMSA Harvard. {\it Nonlinear analysis in geometry and applied mathematics. Part 2}, 23-49,
Harv. Univ. Cent. Math. Sci. Appl. Ser. Math., 2, {\it Int. Press, Somerville, MA}, 2018.
\bibitem[DGS]{DGS} De Philippis, G.; Gasparetto, C.; Schulze, F. A short proof of Allard's and Brakke's regularity theorems. {\it Int. Math. Res. Not. IMRN}, to appear.
\bibitem[DDT]{DDT} Dinew, S.; Do, H.-S.; T\^{o}, T. D. A viscosity approach to the Dirichlet problem for degenerate complex Hessian-type equations. {\it Anal. PDE} {\bf 12} (2019), 505-535.
\bibitem[E]{E} Evans, L. C. Classical solutions of fully nonlinear, convex, second-order elliptic equations.  {\it Comm. Pure Appl. Math.} {\bf 35} (1982), 333-363.
\bibitem[GT]{GT} Gilbarg, D.; Trudinger, N. {\it Elliptic Partial Differential Equations of Second Order}. Springer-Verlag, Berlin-Heidelberg-New York-Tokyo, 1983.
\bibitem[HL1]{HL1} Harvey, R.; Lawson, H. B. Calibrated geometries. {\it Acta Math.} {\bf 148} (1982), 47-157.
\bibitem[HL2]{HL2} Harvey, R.; Lawson, H. B. Dirichlet duality and the nonlinear Dirichlet problem. {\it Comm. Pure Appl. Math.} {\bf 62} (2009), 396-443.
\bibitem[HL3]{HL3} Harvey, R.; Lawson, H. B. Pseudoconvexity for the special Lagrangian potential equation. {\it Calc. Var. Partial Differential Equations} {\bf 60} (2021), 1-37.
\bibitem[K]{K} Krylov, N. V. Boundedly nonhomogeneous elliptic and parabolic equations in a domain. {\it Izv. Akad. Nak. SSSR Ser. Mat.} {\bf 47} (1983), 75-108; English translation in {\it Math. USSR Izv.} {\bf 22} (1984), 67-97.
\bibitem[Li]{Li} Li, C. A compactness approach to Hessian estimates for special Lagrangian equations with supercritical phase. {\it Nonlinear Analysis} {\bf 187} (2019), 434-437.
\bibitem[Lu]{Lu} Lu, S. On the Dirichlet problem for Lagrangian phase equation with critical and supercritical phase. {\it Discrete Contin. Dyn. Syst.}, to appear.
\bibitem[M]{M} Mooney, C. Homogeneous functions with nowhere vanishing Hessian determinant. {\it Ann. Inst. H. Poincar\'{e} Anal. Non Lin\'{e}aire}, to appear.
\bibitem[NTV]{NTV} Nadirashvili, N.; Tkachev, V.; Vladut, S. A non-classical solution to Hessian equation from Cartan isoparametric cubic. {\it Adv. Math.} {\bf 231} (2012), 1589-1597.
\bibitem[NV1]{NV1} Nadirashvili, N.; Vladut, S. Homogeneous solutions of fully nonlinear elliptic equations in four dimensions. {\it Comm. Pure Appl. Math.} {\bf 66} (2013), 1653-1662.
\bibitem[NV2]{NV2} Nadirashvili, N.; Vladut, S. Singular solution to special Lagrangian equations. {\it Ann. Inst. H. Poincar\'{e} Anal. Non Lin\'{e}aire} {\bf 27} (2010), 1179-1188.
\bibitem[NV3]{NV3} Nadirashvili, N.; Vladut, S. Singular solutions of Hessian elliptic equations in five dimensions. {\it J. Math. Pures Appl.} {\bf 100} (2013), 769-784.
\bibitem[Sa]{Sa} Savin, O. Small perturbation solutions for elliptic equations. {\it Comm. Partial Differential Equations} {\bf 32} (2007), 557-578.
\bibitem[Si]{Si} Simon, L. Lectures on Geometric Measure Theory. {\it Proc. C. M. A., Austr. Nat. Univ.}, Vol. 3, 1983.
\bibitem[T1]{T1} Trudinger, N. S. H\"{o}lder gradient estimates for fully nonlinear elliptic equations. {\it Proc. Roy. Soc. Edinburgh Sect. A} {\bf 108} (1988), 57-65.
\bibitem[T2]{T2} Trudinger, N. S. Regularity of solutions of fully nonlinear elliptic equations. {\it Boll. Un. Mat. Ital. A (6)} {\bf 3} (1984), 421-430.
\bibitem[U]{U} Urbas, J. Some interior regularity results for solutions of Hessian equations. {\it Calc. Var. Partial Differential Equations} {\bf 11} (2000), 1-31.
\bibitem[WY1]{WY1} Wang. D. K.; Yuan, Y. Hessian estimates for special Lagrangian equations with critical and supercritical phases in general dimensions. {\it Amer. J. Math.} {\bf 136} (2014), 481-499.
\bibitem[WY2]{WY2} Wang. D. K.; Yuan, Y. Singular solutions to special Lagrangian equations with subcritical phases and minimal surface systems. {\it Amer. J. Math.} {\bf 135} (2013), 1157-1177.
\bibitem[WaY1]{WaY1} Warren, M.; Yuan, Y. Explicit gradient estimates for minimal Lagrangian surfaces of dimension two. {\it Math. Z.} {\bf 262} (2009), 867-879.
\bibitem[WaY2]{WaY2} Warren, M.; Yuan, Y. Hessian estimates for the sigma-2 equation in dimension 3. {\it Comm. Pure Appl. Math.} {\bf 62} (2009), 305-321.
\bibitem[WaY3]{WaY3} Warren, M.; Yuan, Y. Hessian and gradient estimates for three dimensional special Lagrangian equations with large phase. {\it Amer. J. Math.} {\bf 132} (2010), 751-770.
\bibitem[Y1]{Y1} Yuan, Y. A priori estimates of fully nonlinear special Lagrangian equations. {\it Ann. Inst. H. Poincar\'{e} Anal. Non Lin\'{e}aire} {\bf 18} (2001), 261-270.
\bibitem[Y2]{Y2} Yuan, Y. A Bernstein problem for special Lagrangian equations. {\it Invent. Math.} {\bf 150} (2002), 117-125.
\bibitem[Y3]{Y3} Yuan, Y. Global solutions to special Lagrangian equations. {\it Proc. Amer. Math. Soc.} {\bf 134} (2006), 1355-1358.
\bibitem[Z]{Z} Zhou, X. Hessian estimates to special Lagrangian equation on general phases with constraints. {\it Calc. Var. Partial Differential Equations} {\bf 61} (2022), 1-14.
\end{thebibliography}
\end{document}